\title{On existence of Borel flow for ordinary differential equation with a non-smooth vector field}
\author{Nikolay A. Gusev}
\date{%
    \small (Steklov Mathematical Institute of Russian Academy of Sciences, RUDN~University~of~Russia)\\[2ex]%
    \today
}
\begin{document}

\maketitle

\begin{abstract}
For smooth vector fields the classical method of characteristics provides a link between the ordinary differential equation and the corresponding continuity equation (or transport equation). We study an analog of this connection for merely bounded Borel vector fields.
In particular we show that, given a non-negative Borel measure $\bar \mu$ on $\R^d$, existence of $\bar \mu$-measurable flow of a bounded Borel vector field is equivalent to existence of a measure-valued solution to the corresponding continuity equation with the initial data $\bar \mu$.
\end{abstract}

\noindent{\footnotesize\textbf{Key Words and Phrases:} continuity equation, measure-valued solutions, non-smooth vector field, flow, ordinary differential equation.}

\vspace{3pt}

\noindent{\footnotesize\textbf{2010 Mathematics Subject Classification Numbers:} 
35D30, 
34A12, 
34A36.} 

\section{Introduction and statement of the main result}

Let $b\colon \I\times \R^d \to \R^d$ be a bounded Borel vector field, where $\I=[0,T]$, $T>0$.

Consider the Cauchy problems
\begin{equation}\label{ode-classic}
\begin{cases}
\d_t \Fhi_t(x) = b(t,\Fhi_t(x)), \quad t\in(0,T) \\ 
\Fhi_0(x) = x
\end{cases}
\end{equation}
and
\begin{equation}\label{continuity-classic}
\begin{cases}
\d_t u(t,x) + \div_x (u(t,x) b(t,x)) = 0, \quad (t,x)\in (0,T)\times \R^d, \\
u(t,x)|_{t=0} = \bar u(x)
\end{cases}
\end{equation}
The function $\Phi_t(\cdot)$ is called the \emph{(phase) flow} of $b$.

If $b$ and $\bar u$ are smooth then the classical method of characteristics allows to represent the solution of \eqref{continuity-classic} using the flow of $b$ (and vice versa):
\begin{equation*}
u(t,\cdot) = \frac{\bar u}{\det \nabla_x \Fhi_t} \circ \Phi_t,
\end{equation*}
i.e. $u(t,x) = \bar u(y)/\det(\nabla \Fhi_t)(y)|_{y = \Fhi_t^{-1}(x)}$.

In this work we study the relation between the Cauchy problems \eqref{ode-classic} and \eqref{continuity-classic} under significantly weaker assumptions. Namely, we will assume that $b$ is Borel and bounded. 
Nonsmooth vector fields arise in many areas of mathematical physics, in particular, in hyperbolic conservation laws, fluid mechanics and kinetic theory \cite{AmbrosioBV,DiPernaLions}.

For convenience of the reader let us recall the definitions of solutions of \eqref{ode-classic}
and \eqref{continuity-classic} for non-smooth vector fields.
Let 
\begin{equation}\label{e-Gamma-def}
\Gamma:=\Lip_{\|b\|_\infty}(I;\R^d),
\end{equation}
where $\Lip_M(I; \R^d)$ denotes the set of Lipschitz functions $f\colon I\to \R^d$ satisfying
$|f(t) - f(t')|\le M |t-t'|$ for all $t,t' \in I$. We consider $\Gamma$ as a metric space endowed
with the metric of $C(I;\R^d)$.

\begin{definition}
$\gamma\in C(I;\R^d)$ is called an integral curve of $b$ if
$$
\gamma(t) = \gamma(0) + \int_0^t b(s,\gamma(s)) \, ds
$$
for all $t\in I$. An integral curve $\gamma$ of $b$ is called a solution of the Cauchy problem
\begin{equation}\label{ode-weak}
\begin{cases}
\d_t \gamma = b (t,\gamma), \quad t\in(0,T) \\ 
\gamma|_{t=0} = x
\end{cases}
\end{equation}
if $\gamma(0)=x$.
\end{definition}

Let $\Gamma_b$ denote the set of the integral curves of $b$. Clearly $\Gamma_b \subset \Gamma$.

\begin{definition}
A family $\{\mu_t\}_{t\in \I}$ of locally finite (possibly signed) Borel measures on $\R^d$ is called \emph{(Lebesgue) measurable} if
\begin{enumerate}
\item for any bounded Borel set $B\subset \R^d$ the map $t\mapsto |\mu_t|(B)$ is Lebesgue-measurable;
\item for any compact $K\subset \R^d$ the map $t\mapsto |\mu_t|(K)$ belongs to $L^1(\I)$.
\end{enumerate}
Here $|\mu|$ denotes the total variation of a Borel measure $\mu$.
\end{definition}

\begin{definition}
Suppose that $\bar \mu$ is a locally finite measure on $\R^d$. A measurable family $\{\mu_t\}_{t\in \I}$ is called a \emph{(measure-valued) solution of}
\begin{equation}\label{continuity-weak}
\d_t \mu_t + \div(b\mu_t) = 0, \quad \mu_t|_{t=0} = \bar \mu
\end{equation}
if for any $\fhi\in C^1(\R^{d+1})$ such that $\fhi=0$ on $\I\times (\R^d \setminus B_R(0))$ for some $R>0$ the equality
$$
\int_{\R^d} \fhi(t,x) d\mu_t(x) = \int_{\R^d} \fhi(0,x) d\bar\mu(x) + \int_0^t \int_{\R^d} (\d_t \fhi + b\cdot \nabla \fhi) \, d\mu_t \, dt
$$
holds for a.e. $t\in \I$.
\end{definition}

Let us denote with $\L^d$ the Lebesgue measure on $\R^d$. It is easy to see that if $u$ is a classical solution of \eqref{continuity-classic} then $\mu_t := u(t,\cdot) \L^d$ is a measure-valued solution of \eqref{continuity-weak}.
However measure-valued solutions in general are not absolutely continuous with respect to Lebesgue measure.
For example, if $\gamma$ solves \eqref{ode-weak} then $\mu_t:= \delta_{\gamma(t)}$ solves \eqref{continuity-weak}.

The starting point of the present work is the following result proved by O.E. Zubelevich:

\begin{theorem}[see \cite{Zubelevich2012DAN,Zubelevich2012FE}]\label{existence-of-borel-flow-continuous}
If $b\in C(\I\times \R^d; \R^d)$ then there exists a Borel function $F\colon \R^d \to \Gamma$ such that $F(x)$ is a solution of (1) for any $x\in \R^d$.
\end{theorem}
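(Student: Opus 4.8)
The plan is to construct the Borel selection $F$ by combining the classical Peano existence theorem (which guarantees that $\Gamma_b$ is nonempty for each $x$, since $b$ is continuous and bounded) with a measurable selection argument. The key structural observation is that the solution set $x \mapsto \Gamma_b \cap \{\gamma : \gamma(0) = x\}$ is a set-valued map whose graph is closed in $\R^d \times \Gamma$, and from which one wants to select a single integral curve $F(x)$ in a Borel-measurable way.

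First I would verify that $\Gamma = \Lip_{\|b\|_\infty}(I;\R^d)$ is a nice metric space for selection purposes: endowed with the $C(I;\R^d)$ metric it is separable, and by the Arzel\`a–Ascoli theorem it is locally compact (indeed, the subsets with prescribed $\gamma(0)$ in a bounded set are compact, being closed equibounded equicontinuous families). Next I would show that the graph $G := \{(x,\gamma) \in \R^d \times \Gamma : \gamma \in \Gamma_b,\ \gamma(0) = x\}$ is closed. This uses continuity of $b$: if $(x_n,\gamma_n) \to (x,\gamma)$ with each $\gamma_n$ an integral curve through $x_n$, then $\gamma_n \to \gamma$ uniformly, $b(s,\gamma_n(s)) \to b(s,\gamma(s))$ pointwise and boundedly, so passing to the limit in the integral equation $\gamma_n(t) = x_n + \int_0^t b(s,\gamma_n(s))\,ds$ (by dominated convergence) shows $\gamma$ is an integral curve through $x$.

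With the closed graph in hand, I would invoke a measurable selection theorem. The cleanest route is the Kuratowski–Ryll-Nardzewski selection theorem: the set-valued map $x \mapsto S(x) := \{\gamma \in \Gamma_b : \gamma(0) = x\}$ takes nonempty closed values (nonempty by Peano, closed as a section of the closed set $G$), and I would check weak measurability, i.e. that $\{x : S(x) \cap U \neq \emptyset\}$ is Borel for every open $U \subset \Gamma$. This follows from the closedness and local compactness of the structures, reducing to projections of closed sets; alternatively one can use the Jankov–von Neumann uniformization theorem applied directly to the Borel (indeed closed) set $G$ viewed as a subset of the Polish space $\R^d \times \Gamma$, which yields a universally measurable—and after restriction, Borel—selection $F$ with $(x, F(x)) \in G$ for all $x$.

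The main obstacle I anticipate is the measurability of the set-valued map (equivalently, obtaining a genuinely \emph{Borel} rather than merely universally measurable selection). The projection $\pi_{\R^d}(G) = \R^d$ is all of $\R^d$, but projections of closed sets in a product of Polish spaces are in general only analytic, so naive application of a projection theorem gives analytic, not Borel, preimages. I would circumvent this either by exploiting the local compactness of the fibers (compact-valued upper semicontinuous maps admit Borel selections via the Kuratowski–Ryll-Nardzewski theorem once weak measurability is established from the closed graph and $\sigma$-compactness), or by an explicit constructive selection—for instance, selecting at each $x$ the integral curve that is lexicographically minimal with respect to a countable dense sequence of evaluation functionals $\gamma \mapsto \langle \gamma(t_k), e_j\rangle$, successively minimizing over the compact fibers. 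This constructive minimal-selection approach has the advantage of producing a Borel (even pointwise-defined) map directly, sidestepping the analytic-set subtlety, at the cost of verifying that each successive minimization preserves Borel measurability and nonempty compactness of the remaining candidate set.
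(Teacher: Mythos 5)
Your proposal is correct, and its skeleton matches the paper's: Peano's theorem for nonemptiness, Arzel\`a--Ascoli plus continuity of $b$ for compactness of the set of integral curves (your closed-graph/dominated-convergence argument is exactly the closedness verification the paper leaves implicit), and then a measurable selection theorem. The difference lies in the selection device. You work with the set-valued map $x \mapsto S(x)$ and invoke Kuratowski--Ryll-Nardzewski (after verifying weak measurability via upper semicontinuity of a closed-graph map with locally compact-valued fibers), or alternatively a constructive lexicographic minimal selection over the compact fibers. The paper instead applies a cross-section theorem for continuous maps on compact metric spaces (Theorem~\ref{measurable-selection}, from \cite{bogachev2007measure}) directly to the evaluation map $e_0$ restricted to $\Gamma_b$: since $e_0\colon \Gamma_b \to \R^d$ is a continuous surjection (surjectivity being Peano's theorem) from a $\sigma$-compact space, that theorem at once produces a Borel set $B \subset \Gamma_b$ on which $e_0$ is injective with Borel inverse, and $F := (e_0|_B)^{-1}$ is the flow. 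This makes the analytic-versus-Borel obstacle you anticipate simply not arise: no weak measurability of a multifunction needs to be checked, because the selection theorem for continuous maps on compacta already delivers a Borel inverse. Your route costs more verification (the usc/weak-measurability step, or the bookkeeping that each lexicographic minimization preserves Borel measurability and compact nonempty residual fibers) but is more self-contained and constructive; both approaches must in the end patch selections over the exhaustion $\Gamma_b \cap \Gamma^m$, $m \in \mathbb{N}$, since $\Gamma_b$ is only $\sigma$-compact, a point you handle via local compactness and the paper handles the same way in its proof of Theorem~\ref{existence-of-mvs-implies-existence-of-flow}.
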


Zubelevich has also proved that under the assumptions of Theorem~\ref{existence-of-borel-flow-continuous} for any locally finite Borel measure $\bar \mu$ on $\R^d$ the problem \eqref{continuity-weak} has a measure-valued solution with the initial data $\bar \mu$.
In fact the following rather well-known independent statement holds:

\begin{theorem}\label{existence-of-flow-implies-existence-of-mvs}
Suppose that $\bar \mu$ is a locally finite Borel measure on $\R^d$ and there exists a Borel function $F\colon \R^d \to \Gamma$ such that $\forall x\in \R^d$ the curve $F(x)$ solves \eqref{ode-weak}. Then there exists a measure-valued solution of \eqref{continuity-weak} with the initial condition $\bar \mu$.
\end{theorem}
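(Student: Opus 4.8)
The plan is to construct the solution by pushing forward $\bar\mu$ along the flow encoded by $F$. Define the evaluation map $E\colon \I\times\Gamma\to\R^d$ by $E(t,\gamma):=\gamma(t)$; since $\Gamma$ carries the metric of $C(\I;\R^d)$, the map $E$ is continuous. Setting $\Phi_t(x):=E(t,F(x))=F(x)(t)$, the map $(t,x)\mapsto\Phi_t(x)$ is Borel, being the composition of the Borel map $(t,x)\mapsto(t,F(x))$ with the continuous map $E$. I then define $\mu_t:=(\Phi_t)_\#\bar\mu$, the pushforward of $\bar\mu$ under $\Phi_t$, and claim that $\{\mu_t\}_{t\in\I}$ is the desired measure-valued solution.

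First I would check that $\{\mu_t\}$ is a measurable family of locally finite (indeed non-negative) measures. The key observation is the uniform speed bound: every $\gamma\in\Gamma$ satisfies $|\gamma(t)-\gamma(0)|\le\|b\|_\infty t$, so $|\Phi_t(x)-x|\le\|b\|_\infty T$ for all $t$ and $x$. Hence for a bounded Borel set $B\subset B_R(0)$ one has $\Phi_t^{-1}(B)\subset B_{R+\|b\|_\infty T}(0)$, whence $\mu_t(B)=\bar\mu(\Phi_t^{-1}(B))\le\bar\mu(B_{R+\|b\|_\infty T}(0))<\infty$ uniformly in $t$; this yields both the local finiteness and the $L^1$ (in fact $L^\infty$) integrability required by the definition of a measurable family. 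Measurability of $t\mapsto\mu_t(B)=\int_{\R^d}\mathbf 1_B(\Phi_t(x))\,d\bar\mu(x)$ then follows from the joint Borel measurability of $(t,x)\mapsto\Phi_t(x)$ together with Fubini's theorem, the integrand being bounded and supported in a fixed bounded region.

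Next I would verify the weak formulation. Fix a test function $\fhi\in C^1(\R^{d+1})$ vanishing outside $\I\times B_R(0)$. For each fixed $x$ the curve $\gamma=F(x)$ is an integral curve of $b$, so $t\mapsto\fhi(t,\gamma(t))$ is absolutely continuous and, using $\dot\gamma(t)=b(t,\gamma(t))$ for a.e.\ $t$, the chain rule gives
\begin{equation*}
\fhi(t,\Phi_t(x))=\fhi(0,x)+\int_0^t\big(\d_t\fhi+b\cdot\nabla\fhi\big)(s,\Phi_s(x))\,ds.
\end{equation*}
Integrating this identity against $\bar\mu$ and interchanging the order of integration in the last term by Fubini, the left-hand side becomes $\int\fhi(t,x)\,d\mu_t(x)$, the first term on the right becomes $\int\fhi(0,x)\,d\bar\mu(x)$, and the double integral becomes $\int_0^t\int(\d_t\fhi+b\cdot\nabla\fhi)(s,y)\,d\mu_s(y)\,ds$, which is precisely the required equality (obtained here for every $t\in\I$, not merely almost every $t$).

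The genuinely delicate points are the measurability assertions and the application of Fubini, rather than the differential identity, which is the routine chain rule along a Lipschitz curve. I would isolate the joint Borel measurability of $(t,x)\mapsto b(t,\Phi_t(x))$ as the crucial fact: it guarantees that the integrand along trajectories is a legitimate measurable function, it relies on $b$ being Borel and on the continuity of the evaluation $E$, and it is what makes the entire pushforward construction measurable. The integrability needed to apply Fubini is then immediate from the uniform speed bound, the boundedness of $b$ and of the derivatives of $\fhi$, and the local finiteness of $\bar\mu$.
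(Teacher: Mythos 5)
Your construction is exactly the paper's proof: the paper disposes of this theorem in a single line by asserting that $\mu_t := (\Phi_t)_\# \bar\mu$ with $\Phi_t(x) := e_t(F(x))$ solves \eqref{continuity-weak}, and your proposal simply supplies the verification the paper leaves to the reader (joint Borel measurability of $(t,x)\mapsto\Phi_t(x)$, local finiteness and measurability of the family via the finite-speed bound, and the weak formulation via the chain rule along integral curves plus Fubini). The only nitpick is your parenthetical ``indeed non-negative'': the theorem as stated allows signed $\bar\mu$, in which case the same argument runs through the Jordan decomposition with $|\mu_t| \le (\Phi_t)_\# |\bar\mu|$, changing nothing essential.
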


Our main result is an inverse version of Theorem~\ref{existence-of-flow-implies-existence-of-mvs}:

\begin{theorem}\label{existence-of-mvs-implies-existence-of-flow}
Suppose that $\{\mu_t\}_{t\in \I}$ is a measurable family of \emph{non-negative} locally finite Borel measures on $\R^d$ which solve \eqref{continuity-weak} with some initial condition $\bar \mu \ge 0$. Then there exists a Borel function $F\colon \R^d \to \Gamma$ such that $F(x)$ is a solution of \eqref{ode-weak} for $\bar \mu$-a.e. $x\in \R^d$.
\end{theorem}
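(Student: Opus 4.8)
The plan is to first use the superposition principle to lift the measure-valued solution $\{\mu_t\}$ to a measure on the space of trajectories, then to disintegrate this measure with respect to the initial point, and finally to extract a single trajectory for each starting point by a measurable selection argument.

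First I would invoke the superposition principle for the continuity equation, in the form available for non-negative measures (cf.~\cite{PS2012flows,Bernard2008,AGS2008}): since $b$ is bounded the relevant integrability hypotheses hold automatically, and one obtains a non-negative measure $\eta$ on $\Gamma$, concentrated on the set $\Gamma_b$ of integral curves of $b$, such that $\mu_t = (e_t)_\# \eta$ for a.e.\ $t\in\I$, where $e_t(\gamma):=\gamma(t)$ denotes the evaluation map; in particular $\bar\mu = (e_0)_\# \eta$. Here the hypothesis that the measures are non-negative is essential, since the superposition principle fails for signed measures. As $\bar\mu$ is merely locally finite, I would first reduce to the case of finite mass by exhausting $\R^d$ with balls $B_R$ and applying the construction to the restriction of $\bar\mu$ to $B_R$ (the relevant trajectories stay inside $B_{R+T\|b\|_\infty}$, because every curve in $\Gamma_b$ is Lipschitz with constant $\|b\|_\infty$), so that $\eta$ may be taken $\sigma$-finite.

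Next I would disintegrate $\eta$ with respect to $e_0$. Using $(e_0)_\# \eta = \bar\mu$, the disintegration theorem (see \cite{AGS2008,bogachev2007measure}) yields a $\bar\mu$-measurable family $\{\eta_x\}_{x\in\R^d}$ of probability measures on $\Gamma$ with $\eta = \int_{\R^d} \eta_x\,d\bar\mu(x)$ such that, for $\bar\mu$-a.e.\ $x$, the measure $\eta_x$ is concentrated on $\{\gamma\in\Gamma_b : \gamma(0)=x\}$. In particular this set is non-empty for $\bar\mu$-a.e.\ $x$, i.e.\ there is at least one integral curve of $b$ issuing from $\bar\mu$-a.e.\ point. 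It then remains to make this choice measurable in $x$. Consider $S:=\{(x,\gamma)\in\R^d\times\Gamma : \gamma\in\Gamma_b,\ \gamma(0)=x\}$. Since the evaluation $(s,\gamma)\mapsto\gamma(s)$ is continuous and $b$ is Borel, the map $(s,\gamma)\mapsto b(s,\gamma(s))$ is Borel, so $\gamma\mapsto\sup_t|\gamma(t)-\gamma(0)-\int_0^t b(s,\gamma(s))\,ds|$ is a Borel function (the supremum reduces to a countable dense set of $t$ by continuity); hence $\Gamma_b$, and therefore $S$, is a Borel subset of the Polish space $\R^d\times\Gamma$. By the previous step the projection of $S$ onto $\R^d$ covers $\bar\mu$-a.e.\ point, so the Jankov--von Neumann uniformization theorem provides a universally measurable map $F_0$ with $(x,F_0(x))\in S$; being universally measurable, $F_0$ coincides $\bar\mu$-a.e.\ with a Borel map $F\colon\R^d\to\Gamma$ (extended, say, by the constant curve $t\mapsto x$ elsewhere), and $F$ satisfies $F(x)\in\Gamma_b$ and $F(x)(0)=x$ for $\bar\mu$-a.e.\ $x$, which is the assertion.

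The main obstacle is the first step: one must have at one's disposal a superposition principle valid for this class of bounded Borel vector fields and locally finite non-negative measures, and must check that the trajectories it produces are genuine integral curves in the sense of the integral equation (equivalently, Lipschitz solutions lying in $\Gamma$). The remaining disintegration and selection are, by comparison, routine measure-theoretic tools; the only point demanding some care there is that $\Gamma_b$ need not be closed for a merely Borel $b$, so that one must rely on its being Borel rather than on any compactness, and consequently use a uniformization theorem rather than a selection theorem for closed-valued multifunctions.
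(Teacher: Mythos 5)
Your proposal is correct in substance and starts exactly where the paper does (the superposition principle yielding a non-negative measure $\eta$ on $\Gamma$ concentrated on $\Gamma_b$ with $\mu_t=(e_t)_\#\eta$ and, by weak continuity at $t=0$, $\bar\mu=(e_0)_\#\eta$), but after that it takes a genuinely different route to the selection. The paper works with compactness restored by the measure: since $\Gamma^m=\{\gamma\in\Gamma:\ |\gamma(0)|\le m\}$ is compact (Arzel\`a--Ascoli) and $\eta(\Gamma^m)=\bar\mu(\bar B(0,m))<\infty$, the measure $\eta$ is $\sigma$-finite on a $\sigma$-compact separable space, hence inner regular; one exhausts the ($\eta$-measurable) set $\Gamma_b$ by an increasing sequence of compacts $K_n$ up to an $\eta$-null set, applies the elementary selection theorem for continuous injections-on-a-Borel-subset (Theorem~\ref{measurable-selection}, the same tool used for Theorem~\ref{existence-of-borel-flow-continuous}) to $e_0$ on each $K_n$, and patches the Borel inverses $f_n$ together; the leftover set $N$ is $\bar\mu$-null because $\bar\mu(N)=\eta(\Gamma_b\setminus\cup_n K_n)=0$. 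You instead use the Borel structure of $\Gamma_b$ directly: disintegration of $\eta$ over $e_0$ to see that integral curves exist from $\bar\mu$-a.e.\ starting point, Borelness of the graph set $S$, Jankov--von Neumann uniformization to get a universally measurable selection, and an a.e.\ Borel modification. Both arguments are valid; the paper's buys elementarity (inner regularity plus a compact-map selection theorem, and only $\eta$-measurability of $\Gamma_b$ via Fubini, with Borelness merely quoted from \cite{Bernard2008}), while yours trades that for heavier descriptive set theory but avoids any appeal to inner regularity or compact exhaustion. One side remark: your closing claim that one \emph{must} use a uniformization theorem because $\Gamma_b$ is not closed is too strong --- the paper's proof shows that inner regularity recovers enough compactness to run a compactness-based selection.

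Two caveats. First, your reduction of the superposition principle to finite mass is circular as sketched: applying "the construction" to $\bar\mu\,\llcorner\, B_R$ presupposes a measure-valued solution with initial datum $\bar\mu\,\llcorner\, B_R$, and the restriction of $\{\mu_t\}$ is not such a solution; producing one is essentially what superposition itself would give. This is harmless only because the principle, as stated in the paper (after \cite{AmbrosioBV} and Theorem 8.2.1 of \cite{AGS2008}), already covers locally finite non-negative solutions, so no reduction is needed; and the $\sigma$-finiteness of $\eta$ that your disintegration step actually requires follows directly from $\eta(\Gamma^m)=\bar\mu(\bar B(0,m))<\infty$, with the disintegration then performed on the sets $e_0^{-1}(\bar B(0,m))$ and glued. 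Second, both you and the paper use $\bar\mu=(e_0)_\#\eta$ even though \eqref{representation-of-mvs} is stated only for a.e.\ $t\in(0,T)$; this deserves the one-line justification that $t\mapsto(e_t)_\#\eta$ is weakly-$*$ continuous and measure-valued solutions attain their initial datum in the weak-$*$ sense as $t\to0$.
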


Note that existence of signed measure-valued solution in general might provide no useful information about the integral curves. In particular one can take $b(x) = -1$, $x \ge 0$ and $b(x)=1$, $x<0$ and $\bar \mu = 1_{(0,+\infty)}\L^1 - 1_{(-\infty, 0)}\L^1$, where
$1_E$ denotes the characteristic function of $E\subset \R^d$:
\begin{equation*}
1_E(x) = \begin{cases}
1, x\in E, \\
0, x\notin E.
\end{cases}
\end{equation*}
Indeed, $(b \bar \mu)'=0$ but there are no integral curves starting from $x\in [0,T)$ (after touching zero they are not defined). Therefore assumption that $\mu_t\ge 0$ is crucial.
Moreover, Theorem~\ref{existence-of-mvs-implies-existence-of-flow} shows that for this vector field and for any positive measure $\bar \mu$ on $[0,T)$ the problem \eqref{continuity-weak} has no measure-valued solutions.

In the example above it is possible to modify vector field by redefining the value of $b$ at $x=0$. Namely, for $\tilde b = 1_{(-\infty,0)} - 1_{(0,+\infty)}$ the Cauchy problem \eqref{continuity-weak} has a weak solution, in particular, for any finite positive measure $\bar \mu$ on $[0,T)$. However the encountered ``pathology'' cannot always be removed by redefining the vector field on a Lebesgue-negligible subset. For example (see \cite{PS2012flows}), if $K\subset [0,1]$ is a closed totally disconnected set of positive Lebesgue measure
and $b = 1_K$ then solutions of \eqref{ode-weak} do not exist for $x\in K$.

In view of Theorems~\ref{existence-of-flow-implies-existence-of-mvs} and~\ref{existence-of-mvs-implies-existence-of-flow} existence of Borel flow is, in a certain sense, equivalent to existence of a nonegative measure-valued solution of \eqref{continuity-weak}:

\begin{corollary}
Let $\bar \mu \ge 0$ be a locally finite Borel measure on $\R^d$.
Then the following statements are equivalent
\begin{enumerate}
\item there exists a Borel function $F\colon \R^d \to \Gamma$ such that for $\bar \mu$-a.e. $x\in \R^d$ the curve $F(x)$ solves (1);
\item the problem (2) with the initial data $\bar \mu$ has a non-negative measure-valued solution.
\end{enumerate}
\end{corollary}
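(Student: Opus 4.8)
The plan is to obtain both implications directly from the two theorems already established, the only non-formal point being a mismatch between an ``almost everywhere'' and an ``everywhere'' hypothesis. The implication (2)~$\Rightarrow$~(1) requires no work: a non-negative measurable solution of \eqref{continuity-weak} with initial data $\bar\mu\ge 0$ is exactly the hypothesis of Theorem~\ref{existence-of-mvs-implies-existence-of-flow}, whose conclusion is precisely statement~(1).

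For (1)~$\Rightarrow$~(2) I would like to quote Theorem~\ref{existence-of-flow-implies-existence-of-mvs}, but its hypothesis asks that $F(x)$ solve \eqref{ode-weak} for \emph{every} $x\in\R^d$, whereas (1) guarantees this only for $\bar\mu$-a.e.\ $x$. Rather than applying the theorem verbatim I would re-examine the construction behind it: one sets $\eta:=F_\#\bar\mu$, a non-negative Borel measure on $\Gamma$, and defines $\mu_t:=(e_t)_\#\eta$, where $e_t\colon\Gamma\to\R^d$, $e_t(\gamma):=\gamma(t)$, is the evaluation map. The verification that $\{\mu_t\}$ is measurable and solves \eqref{continuity-weak} uses only that $\eta$-a.e.\ curve is an integral curve of $b$, together with Fubini's theorem, and is insensitive to the values of $F$ on any $\bar\mu$-negligible set.

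The key observation is therefore that under (1) the set $N:=\{x\in\R^d : F(x)\text{ does not solve }\eqref{ode-weak}\}$ is $\bar\mu$-negligible, so that $\eta$ is concentrated on the set $\Gamma_b$ of integral curves and the above construction applies. The resulting family $\{\mu_t\}$ is non-negative, because $\bar\mu\ge 0$ and pushforwards preserve non-negativity, and its initial trace is $\mu_0=(e_0)_\#\eta=(e_0\circ F)_\#\bar\mu=\bar\mu$, since $e_0(F(x))=F(x)(0)=x$ for every $x\notin N$. This is exactly statement~(2).

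I expect the only genuine subtlety to be this passage from the ``for every $x$'' hypothesis of Theorem~\ref{existence-of-flow-implies-existence-of-mvs} to the ``for $\bar\mu$-a.e.\ $x$'' hypothesis of (1); once one records that the pushforward construction ignores $\bar\mu$-null (equivalently $\eta$-null) sets, the argument closes. The Borel measurability of $N$ and of the map $t\mapsto\mu_t$ is routine and follows the measurability arguments already developed in the proofs of Theorems~\ref{existence-of-flow-implies-existence-of-mvs} and~\ref{existence-of-mvs-implies-existence-of-flow}.
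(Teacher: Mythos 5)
Your proposal is correct and follows essentially the same route as the paper, which states the corollary as a direct consequence of Theorems~\ref{existence-of-flow-implies-existence-of-mvs} and~\ref{existence-of-mvs-implies-existence-of-flow} without further argument. Your handling of the a.e./everywhere mismatch in (1)~$\Rightarrow$~(2) --- re-running the pushforward construction $\mu_t=(e_t\circ F)_\#\bar\mu$ rather than citing Theorem~\ref{existence-of-flow-implies-existence-of-mvs} verbatim --- is exactly the right fix, and it is genuinely needed, since $F$ cannot in general be redefined on the $\bar\mu$-null set so as to solve \eqref{ode-weak} there (solutions may fail to exist at such points, as the paper's example $b=1_K$ shows).
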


\section{Proof of the main result}

Given metric spaces $X$ and $Y$ and Borel function $f\colon X\to Y$ let $f_\# \mu$ denote the push-forward (or image) of a measure $\mu$ on $X$ under $f$. I.e. $(f_\#\mu)(B) := \mu(f^{-1}(B))$ for any Borel set $B\subset Y$.
In this notation $\int \fhi(f(x))\, d\mu(x) = \int \fhi(y) d\, (f_\# \mu)(y)$ for any bounded Borel function $\fhi\colon Y\to \R$.

Let us denote by $e_t$ the map $e_t\colon \Gamma \to \R^d$ defined by $e_t(\gamma):=\gamma(t)$.
We also denote by $e$ the map $e\colon \I\times \Gamma \to \I\times \R^d$ defined by $e(t,\gamma) := (t, \gamma(t))$.

It is easy to see that Theorem~\ref{existence-of-flow-implies-existence-of-mvs} can be proved
by checking that $$\mu_t := (\Phi_t)_\# \bar \mu$$ solves \eqref{continuity-weak}, where $\Phi_t(x) := e_t(F(x))$.

The proofs of Theorem~\ref{existence-of-mvs-implies-existence-of-flow} and Theorem~\ref{existence-of-borel-flow-continuous} rely on the following corollary of the measurable selection theorem:

\begin{theorem}[see e.g. \cite{bogachev2007measure}, Th. 6.9.7]\label{measurable-selection}
 Let $X$ be a compact metric space, $Y$ be a Hausdorff topological space and $f\colon X \to Y$ be a continuous mapping. Then there exists a Borel set $B\subset X$ such that $f(B) = f(X)$, $f$ is injective on $B$ and the mapping $f^{-1}\colon f(X) \to B$ is Borel.
\end{theorem}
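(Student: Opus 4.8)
The plan is to build the required Borel set $B$ as the (fiberwise) graph of a \emph{lexicographic minimum} selection, exploiting the fact that a compact metric space carries a countable family of continuous real functions separating its points.

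\emph{Separating family and fiberwise minima.} First I would fix a metric $d$ on $X$ and a countable dense set $\{z_k\}_{k\ge1}$, and set $g_k:=\min(d(\cdot,z_k),1)\colon X\to[0,1]$. These are continuous and separate points: if $x\ne x'$ one may pick $z_k$ with $d(z_k,x)<d(x,x')/2$, so that $g_k(x)\ne g_k(x')$. Hence $x\mapsto(g_k(x))_{k\ge1}$ is a continuous injection of the compact space $X$ into the countable product of lines, and therefore a homeomorphism onto its image. Each fiber $F_y:=f^{-1}(y)$ is nonempty and closed (since $Y$ is $T_1$, so $\{y\}$ is closed), hence compact. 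The idea is to single out of each $F_y$ the point minimizing $g_1$, then among those the one minimizing $g_2$, and so on; because the $g_k$ separate points, this procedure isolates exactly one point of each fiber.

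\emph{Inductive construction.} Set $A_0:=X$. Given a compact set $A_{k-1}$ with $f(A_{k-1})=f(X)$, define
$$
a_k(y):=\min\{\,g_k(x)\colon x\in A_{k-1}\cap F_y\,\},\qquad A_k:=\{x\in A_{k-1}\colon g_k(x)=a_k(f(x))\}.
$$
Since $A_{k-1}$ is compact and $g_k$ continuous, the minima exist, $A_k$ meets every fiber, and $f(A_k)=f(X)$. I would then put $B:=\bigcap_{k\ge1}A_k$; it is closed (hence Borel), it meets every fiber (a nested intersection of nonempty compacts), and $B\cap F_y$ is a single point, because any two of its points share all values $g_k$ and the $g_k$ separate points. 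This yields at once $f(B)=f(X)$ and injectivity of $f$ on $B$, so that $f^{-1}\colon f(X)\to B$ is well defined.

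\emph{Borel measurability — the main point.} The crux is to show that the selection $s:=f^{-1}\colon f(X)\to X$ is Borel, and here compactness and the Hausdorff property enter decisively. I would prove by induction that each $a_k$ is upper semicontinuous on $f(X)$: indeed $\{y\colon a_k(y)\le t\}$ is exactly the image under $f$ of the closed (hence compact) set $\{x\in A_{k-1}\colon g_k(x)\le t\}$, and the continuous image of a compact set in a Hausdorff space is closed, hence Borel in $f(X)$. The same argument, applied to $f$ restricted to the compact set $A_{k-1}$, also shows $A_k$ is closed, which closes the induction. Since $B\subset A_k$ forces $g_k(s(y))=a_k(y)$, the coordinates of $s$ in the embedding $x\mapsto(g_k(x))$ are precisely the Borel functions $a_k$; as that embedding is a homeomorphism onto $X$, it follows that $s$ is Borel into $X$. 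The delicate steps to verify carefully are that each refinement $A_k$ genuinely still surjects onto $f(X)$ and that ``image of a compact set is closed'' remains available at every stage — both of which rest exactly on compactness of $X$ (inherited by each $A_k$) and on $Y$ being Hausdorff.
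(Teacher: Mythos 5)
Your overall strategy (lexicographic minimization along a countable separating family of continuous functions, then reading Borelness of the selection off the marginal functions $a_k$) is a legitimate classical route to this theorem, and the paper itself offers no proof to compare against --- it simply cites Bogachev, Th.~6.9.7. However, as written your induction has a genuine gap: the semicontinuity goes the wrong way, and with it the claim that each $A_k$ is closed. Your (correct) computation $\{y\colon a_k(y)\le t\}=f\bigl(\{x\in A_{k-1}\colon g_k(x)\le t\}\bigr)$ shows that $a_k$ has closed \emph{sublevel} sets, i.e.\ that $a_k$ is \emph{lower} semicontinuous, not upper. Consequently $A_k=\{x\in A_{k-1}\colon g_k(x)-a_k(f(x))=0\}$ is the zero set of a nonnegative \emph{upper} semicontinuous function and need not be closed. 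This already fails at the first step: take $X=\bigl([0,1]\times\{1\}\bigr)\cup\{(0,0)\}\subset\R^2$, $Y=[0,1]$, $f(u,v)=u$, and $g_1(x)=\min\bigl(d(x,(0,0)),1\bigr)$. Then $a_1(y)=1$ for $y>0$ but $a_1(0)=0$ (so $a_1$ is lsc and not usc), and $A_1=\bigl((0,1]\times\{1\}\bigr)\cup\{(0,0)\}$ is not closed, since $(1/n,1)\to(0,1)\notin A_1$. Once compactness of $A_1$ is lost, the induction collapses at $k=2$ exactly at what you rightly call the main point: $\{y\colon a_2(y)\le t\}=f\bigl(\{x\in A_1\colon g_2(x)\le t\}\bigr)$ is now the continuous image of a merely Borel set, which in general is only analytic (Souslin) rather than Borel, so the tool ``continuous image of a compact set is closed'' is no longer available and the Borelness of $a_2$ --- hence of $A_2$, of $B$, and of $f^{-1}$ --- is not established.

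Note what does survive: fiberwise, the argmin sets $M_k(y):=A_k\cap f^{-1}(y)$ are nonempty and compact by induction (argmin of a continuous function over a nonempty compact set), so $B=\bigcap_k A_k$ still meets every fiber in exactly one point, $f(B)=f(X)$ and $f$ is injective on $B$; it is only the measurability that your argument leaves unproven. The standard repair stays within your scheme: for each $k$, each $t$, and each finite rational vector $(q_1,\dots,q_{k-1})$, apply your compact-image argument to the \emph{compact} set $\{x\in X\colon g_1(x)\le q_1,\dots,g_{k-1}(x)\le q_{k-1},\ g_k(x)\le t\}$ to conclude that $y\mapsto\min\{g_k(x)\colon x\in f^{-1}(y),\ g_i(x)\le q_i \text{ for } i<k\}$ is lower semicontinuous; then, using compactness of the fibers, characterize $\{a_k\le t\}$ by countable intersections and unions of sublevel sets of these functions, with the rational thresholds $q_i$ sandwiched around the (inductively Borel) values $a_i(y)$. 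This yields each $a_k$ Borel, after which your concluding step --- $g_k(s(y))=a_k(y)$ together with the fact that $x\mapsto(g_k(x))_{k\ge1}$ is a homeomorphism onto its image --- goes through and gives both $B$ Borel (as $\bigcap_k A_k$ with each $A_k$ Borel) and $f^{-1}$ Borel.
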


Before proving Theorem~\ref{existence-of-mvs-implies-existence-of-flow}, let us briefly mention the main idea of the proof of Theorem~\ref{existence-of-borel-flow-continuous}.
If $b$ is continuous (or, more generally, $L^1(I; C(\R^d))$) then Arzel\`{a}--Ascoli theorem imples that $\Gamma_b$
is compact. Therefore, using Theorem~\ref{measurable-selection} with $X=\Gamma_b$ and $f=e_0$, one can
select a Borel flow $F$ of $b$.

When $b$ is not continuous it is no longer evident that $\Gamma_b$ is compact.
However, if there exists a non-negative regular Borel measure $\eta$ on $\Gamma$ and $\eta$ is concentrated on $\Gamma_b$
(i.e. $\eta(\Gamma \setminus \Gamma_b) = 0$), then $\Gamma_b$ is $\sigma$-compact (up to an $\eta$-negligible subset).

Existence of a suitable measure $\eta$ follows from existence of a non-negative measure-valued solution of \eqref{continuity-weak} in view of the following Superposition Principle (see \cite{AmbrosioBV} and Theorem 8.2.1 in \cite{AGS2008}):

\begin{theorem}[Superposition Principle]
If $\{\mu_t\}_{t\in \I}$ is a measurable family of non-negative locally finite Borel measures on $\R^d$ which solves \eqref{continuity-weak} then there exists a non-negative locally finite Borel measure $\eta$ on $\Gamma$ such that
$\eta$-a.e. $\gamma \in \Gamma$ is an integral curve of $b$ and
\begin{equation}\label{representation-of-mvs}
\mu_t= (e_t)_\# \eta
\end{equation}
holds for a.e. $t\in (0,T)$.
\end{theorem}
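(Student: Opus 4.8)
The plan is to reduce to the classical method of characteristics by spatial regularization, lift the resulting smooth solutions to measures on the path space $\Gamma$, and pass to the limit. Fix a smooth convolution kernel $\rho_\epsilon$ and set $\mu_t^\epsilon := \rho_\epsilon * \mu_t$ together with
\[
b_t^\epsilon := \frac{\rho_\epsilon * (b_t \mu_t)}{\mu_t^\epsilon}.
\]
A direct computation gives $\d_t \mu_t^\epsilon + \div(b_t^\epsilon \mu_t^\epsilon) = 0$, and, crucially, since $\mu_t \ge 0$ the value $b_t^\epsilon(x)$ is a genuine $\mu_t$-weighted average of $b(t,\cdot)$, so $|b_t^\epsilon| \le \|b\|_\infty$. (This is the one place where non-negativity of $\mu_t$ is essential.) Thus $b^\epsilon$ is smooth in space and bounded by $\|b\|_\infty$.

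Because $b^\epsilon$ is smooth its classical flow $\Phi^\epsilon$ exists, and the method of characteristics yields $\mu_t^\epsilon = (\Phi_t^\epsilon)_\# \mu_0^\epsilon$. Since $|b^\epsilon| \le \|b\|_\infty$, each trajectory $t \mapsto \Phi_t^\epsilon(x)$ belongs to $\Gamma$, so the assignment $x \mapsto \Phi_\cdot^\epsilon(x)$ maps $\R^d$ into $\Gamma$ and I may define the lifted measure $\eta^\epsilon := (\Phi_\cdot^\epsilon)_\# \mu_0^\epsilon$ on $\Gamma$. By construction $(e_t)_\# \eta^\epsilon = \mu_t^\epsilon$ for every $t$.

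Next I would extract a limit. As all curves carried by $\eta^\epsilon$ share the Lipschitz constant $\|b\|_\infty$, Arzel\`a--Ascoli reduces tightness of the family $\{\eta^\epsilon\}$ to tightness of the starting distributions $(e_0)_\# \eta^\epsilon = \mu_0^\epsilon$, which holds since $\mu_0^\epsilon \rightharpoonup \bar\mu$. Passing to a subsequence, $\eta^\epsilon \rightharpoonup \eta$ for some non-negative measure $\eta$ on $\Gamma$; testing $(e_t)_\# \eta^\epsilon = \mu_t^\epsilon$ against continuous functions and using $\mu_t^\epsilon \rightharpoonup \mu_t$ yields the representation \eqref{representation-of-mvs}.

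The hard part is to prove that $\eta$ is concentrated on integral curves, i.e.
\[
\int_\Gamma \Bigl| \gamma(t) - \gamma(0) - \int_0^t b(s,\gamma(s))\,ds \Bigr|\, d\eta(\gamma) = 0 .
\]
The obstacle is that, since $b$ is only Borel, the functional $\gamma \mapsto \int_0^t b(s,\gamma(s))\,ds$ is not continuous for uniform convergence, so I cannot pass to the limit directly. I would instead approximate $b$ by continuous vector fields $c$ in $L^1(d\mu_t\,dt)$. For continuous $c$ the functional $\Phi_c(\eta) := \int_\Gamma |\gamma(t) - \gamma(0) - \int_0^t c(s,\gamma(s))\,ds|\,d\eta$ is lower semicontinuous under weak convergence on the tight family, while for $\eta^\epsilon$ the trajectories satisfy $\gamma(t)-\gamma(0) = \int_0^t b^\epsilon(s,\gamma(s))\,ds$, giving
\[
\Phi_c(\eta^\epsilon) \le \int_0^t \int_{\R^d} |b_s^\epsilon - c|\, d\mu_s^\epsilon\, ds .
\]
A standard convolution estimate (using uniform continuity of $c$) shows $\limsup_\epsilon \int_0^t\!\int |b_s^\epsilon - c|\,d\mu_s^\epsilon\,ds \le \int_0^t\!\int |b - c|\,d\mu_s\,ds$, whence $\Phi_c(\eta) \le \int_0^t\!\int|b-c|\,d\mu_s\,ds$. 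Since $(e_s)_\#\eta = \mu_s$ also controls $|\Phi_b(\eta) - \Phi_c(\eta)| \le \int_0^t\!\int|b-c|\,d\mu_s\,ds$, letting $c \to b$ in $L^1(d\mu_t\,dt)$ forces $\Phi_b(\eta)=0$, as desired. Finally, since the theorem concerns locally finite measures while the above is cleanest for finite ones, I would localize: as $b$ is bounded, curves starting in $B_R$ remain in $B_{R+T\|b\|_\infty}$, so one runs the argument for the finite restrictions of $\bar\mu$ to balls and recovers the general case by exhaustion.
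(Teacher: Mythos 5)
Your argument is, in substance, the standard proof of this theorem. Note that the paper itself does not prove the Superposition Principle at all: it quotes it from the literature (Ambrosio's paper on BV vector fields and Theorem 8.2.1 of Ambrosio--Gigli--Savar\'e), and the proof given there is exactly the scheme you propose: mollification of $\mu_t$ and of $b\mu_t$, the bound $|b_t^\epsilon|\le\|b\|_\infty$ coming from non-negativity of $\mu_t$ (you correctly identify this as the place where positivity is essential), representation of $\mu_t^\epsilon$ by the flow of $b^\epsilon$, compactness of the lifted measures $\eta^\epsilon$, and concentration on integral curves via approximation of $b$ by continuous fields in $L^1$ together with lower semicontinuity of the defect functional. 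For a \emph{finite} initial measure this outline is correct, up to two technical points you should make explicit: the kernel $\rho_\epsilon$ must be strictly positive (e.g.\ Gaussian), so that $\mu_t^\epsilon>0$ everywhere and $b_t^\epsilon$ is well defined and locally Lipschitz in $x$ (with a compactly supported kernel the quotient is $0/0$ off the support of $\mu_t^\epsilon$); and $b^\epsilon$ is only measurable in $t$, so ``classical flow'' means a Carath\'eodory flow, and the identity $\mu_t^\epsilon=(\Phi_t^\epsilon)_\#\mu_0^\epsilon$ requires the uniqueness theorem for the continuity equation with velocity fields Lipschitz in $x$.

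The genuine gap is the final localization step. Restricting $\bar\mu$ to a ball $B_R$ does not produce a measure-valued solution of \eqref{continuity-weak} with initial datum equal to that restriction: the equation gives you no way to decide which part of the mass of $\mu_t$ at time $t$ ``originated'' in $B_R$, and decomposing $\mu_t$ according to where its mass started is precisely what the superposition principle would provide, so the proposed reduction is circular. (Restricting $\mu_t$ to the shrinking balls $B_{R-t\|b\|_\infty}$ fails as well: mass exits through the moving boundary, so one obtains only a differential inequality, not a solution.) The standard repair is not to localize the data but to localize the topology: run your argument with vague (local weak$^*$) convergence. The space $\Gamma=\cup_m\Gamma^m$, $\Gamma^m=\{\gamma\in\Gamma\colon|\gamma(0)|\le m\}$, is $\sigma$-compact and locally compact; the uniform bounds $\eta^\epsilon(\Gamma^m)=\mu_0^\epsilon(\bar B(0,m))\le\bar\mu(\bar B(0,m+1))$ give a vaguely convergent subsequence; the evaluation maps $e_t$ are proper (preimages of compact sets are compact by Arzel\`a--Ascoli, since $|\gamma(t)|$ bounded forces $|\gamma(0)|$ bounded), so push-forward under $e_t$ passes to the vague limit; and in the defect functionals one inserts cutoffs $\theta(\gamma(0))$ with $\theta\in C_c(\R^d)$, the dominated-convergence steps being justified because the paper's definition of a measurable family already guarantees $\int_0^T\mu_s(K)\,ds<\infty$ for every compact $K$. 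With this modification your proof is complete.
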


\goodbreak

\begin{proof}[Proof of Theorem~\ref{existence-of-mvs-implies-existence-of-flow}]
Let us prove that $\Gamma_b$ is a $\eta$-measurable set (the set $\Gamma_b$ is in fact Borel (see \cite{Bernard2008}, Proposition 2).

Let $t_k$, $k\in \N$, denote the rational points in $\I$. Then
\begin{equation*}
\Gamma_b = \bigcap_{k\in \N} G_k^{-1}(0)
\end{equation*}
where $G_k(\gamma) = \gamma(t_k) - \gamma(0) - \int_0^{t_k} b(s,\gamma(s)) \, ds$.
Hence it is sufficient to prove that for any $t\in \I$ the map $$G(\gamma):=\gamma(t) - \gamma(0) - \int_0^t b(s,\gamma(s))\, ds$$
is $\eta$-measurable.

Consider the measure $\L^1 \times \eta$ on $[0,t]\times \Gamma$.
The function $(s,\gamma) \mapsto b(s,\gamma(s))$ is Borel as the composition $b\circ e$, where $b\colon [0,t]\times \R^d \to \R^d$ is Borel and $e \colon [0,t]\times \Gamma \to [0,t] \times \R^d$ is continuous. Hence $b\circ e \in L^1(\L^1 \times \eta)$ and therefore by Fubini's theorem the function
\begin{equation*}
\gamma \mapsto \int_0^t b(s,\gamma(s))\, ds
\end{equation*}
is $\eta$-measurable. The functions $\gamma \mapsto \gamma(t)$ and $\gamma \mapsto \gamma(0)$ are continuos hence we have proved that $G$ is $\eta$-measurable.

By \eqref{e-Gamma-def} and Arzel\`a-Ascoli theorem the sets $\Gamma^m=\{\gamma \in \Gamma\colon \; |\gamma(0)| \le m\}$ are compacts for all $m\in \mathbb N$. Since $\Gamma = \cup_{m\in \mathbb N} \Gamma^m$ the space $\Gamma$ is locally compact. Moreover, $\eta$ is $\sigma$-finite,
since
\begin{equation*}
\eta(\Gamma^m) = \int 1_{\bar B(0,m)}(\gamma(0)) \, d\eta(\gamma) = \int 1_{\bar B(0,m)}(x) \, d\bar \mu(x) < \infty,
\end{equation*}
where $\bar B(x,r) = \{y\in \R^d \colon \; |x-y|\le r\}$.
Finally, $\Gamma$ is separable (being a subset of separable space $C(I; \R^d)$). 
Hence $\eta$ is inner regular
(see e.g. \cite{AFP}, Prop. 1.43). 

By inner regularity of $\eta$ we can find a sequence of compacts $K_n\subset \Gamma_b$ such that $K_n \subset K_{n+1}$ and 
\begin{equation*}
\eta(\Gamma_b \setminus \cup_{n=1}^\infty K_n) = 0.
\end{equation*}

Since $e_0 \colon \Gamma \to \R^d$ is continuous, for each $n\in \N$ by Theorem~\ref{measurable-selection} there exists a Borel function $f_n\colon e_0(K_n) \to K_n$ such that $e_0(f_n(x)) = x$ for any $x\in e_0(K_n)$. We define
\begin{equation*}
F(x):=\begin{cases}
f_n(x), \quad x \in e_0(K_n) \setminus \cup_{j=1}^{n-1} e_0(K_j), \\
0, \quad x \in N,
\end{cases}
\end{equation*}
where $N:= \R^d \setminus \cup_{n=1}^\infty e_0(K_n)$. Clearly the function $F$ is a pointwise limit of Borel functions, hence $F$ is Borel.

It remains to compute that
\begin{equation*}
\bar \mu(N) = \int 1_N(\gamma(0))\, d\eta(\gamma) = \eta(\Gamma_b \setminus \cup_{n=1}^\infty K_n) = 0. \qedhere
\end{equation*}
\end{proof}

\section{Acknowledgements}
This work was supported by the Ministry of Education and Science of the Russian Federation (the Agreement number \No 02.a03.21.0008).
The author would like to thank P. Bonicatto, V.P. Burskii and V.V. Zharinov for interesting disussions and their valuable remarks.

\bibliographystyle{unsrt}
\bibliography{bibliography}

\begin{thebibliography}{1}

\bibitem{AmbrosioBV}
L.~Ambrosio.
\newblock {Transport equation and Cauchy problem for BV vector fields}.
\newblock {\em Invent. Math.}, 158:227--260, 2004.

\bibitem{DiPernaLions}
R.~J. DiPerna and P.-L. Lions.
\newblock {Ordinary differential equations, transport theory and Sobolev
  spaces}.
\newblock {\em Invent. Math.}, 98:511--547, 1989.

\bibitem{Zubelevich2012DAN}
O.~E. Zubelevich.
\newblock Ordinary differential equations with a non-lipschitz right-hand side.
\newblock {\em Doklady Mathematics}, 86(1):545--548, 2012.

\bibitem{Zubelevich2012FE}
O.~E. Zubelevich.
\newblock Several notes on existence theorem of peano.
\newblock {\em Funkcial. Ekvac.}, 55(1):89--97, 2012.

\bibitem{PS2012flows}
Emanuele Paolini and Eugene Stepanov.
\newblock Flows of measures generated by vector fields.
\newblock {\em To appear in Proc. Royal Soc. Edinburgh Ser. A - Mathematics},
  2012.

\bibitem{bogachev2007measure}
Vladimir~I. Bogachev.
\newblock {\em Measure theory}, volume~2.
\newblock Springer Berlin Heidelberg, 2007.

\bibitem{AGS2008}
Luigi Ambrosio, Nicola Gigli, and Giuseppe Savare.
\newblock {\em Gradient Flows}.
\newblock Springer Nature, 2008.

\bibitem{Bernard2008}
Patrick Bernard.
\newblock {Some remarks on the continuity equation}.
\newblock In {\em {S{\'e}minaire: {\'E}quations aux D{\'e}riv{\'e}es
  Partielles, Ecole Polytechnique.}}, Palaiseau, France, 2008.

\bibitem{AFP}
Luigi Ambrosio, Nicola Fusco, and Diego Pallara.
\newblock {\em Functions of bounded variation and free discontinuity problems}.
\newblock Oxford university press, 2000.

\end{thebibliography}

\bigskip

\hspace{18em}\vbox{\footnotesize%
\noindent\textit{Nikolay A. Gusev}\\
Department of Mathematical Physics,\\
Steklov Mathematical Institute\\
of Russian Academy of Sciences,\\
8 Gubkina St, Moscow, 119991;\\
Department of Applied Mathematics,
\\RUDN University,\\
6 Miklukho-Maklay St, Moscow, 117198;\\
{E-mail address}: \href{mailto:n.a.gusev@gmail.com}{\texttt{n.a.gusev@gmail.com}}}

\end{document}